\def\qed{\hfill {\hbox{${\vcenter{\vbox{               
   \hrule height 0.4pt\hbox{\vrule width 0.4pt height 6pt
   \kern5pt\vrule width 0.4pt}\hrule height 0.4pt}}}$}}}
\def\tr{\triangleright}
\newtheorem{theorem}{Theorem}
\newtheorem{proposition}[theorem]{Proposition}
\theoremstyle{definition}
\newtheorem{example}{Example}
\newtheorem{definition}{Definition}
\newtheorem{remark}{Remark}
\date{}
\title{\Large \textbf{G-Family Polynomials}}
\author{
Madeline Brown\footnote{Email: mbrown2274@scrippscollege.edu.}
 \and
Sam Nelson\footnote{Email: Sam.Nelson@cmc.edu. Partially supported by
Simons Foundation Collaboration Grant 702597.}}
\begin{document}
\maketitle

\begin{abstract}
We introduce two notions of quandle polynomials for $G$-families of quandles: 
the quandle polynomial of the associated quandle and a $G$-family polynomial
with coefficients in the group ring of $G$. As an application we define
image subquandle polynomial enhancements of the $G$-family counting invariant
for trivalent spatial graphs and handlebody-links. We provide examples to
show that the new enhancements are proper.
\end{abstract}

\parbox{6in} {\textsc{Keywords:} Quandle polynomials, $G$-families of quandles,
enhancements of counting invariants, handlebody-links, trivalent spatial graphs

\smallskip

\textsc{2020 MSC:} 57K12}

\section{\large\textbf{Introduction}}\label{I}

In \cite{J} and \cite{M} 
algebraic structures known as \textit{quandles} were introduced,
with axioms derivable from the Reidemeister moves in knot theory. To every
knot is associated a \textit{fundamental quandle}, also called the
\textit{knot quandle}, whose isomorphism class determines the knot complement 
up to (possibly orientation-reversing) homeomorphism, making it an extremely 
powerful knot invariant. Direct comparison of knot quandles given by 
presentations is difficult, but given any finite quandle $X$ the set of
quandle homomorphisms from the knot quandle to $X$ provides a number of easily 
computable knot and link invariants. In particular, cardinality of the homset
is an integer-valued invariant known as the \textit{quandle counting invariant}
$\Phi_X^{\mathbb{Z}}(K)$, and stronger invariants which determine the counting
invariant are known as \textit{enhancements}. For more detail
and further references, see \cite{EN}.

In \cite{I}, the notion of a \textit{$G$-family of quandles} for a group $G$
and set $X$ was introduced and applied to define invariants of 
handlebody-links and  trivalent spatial graphs. A $G$-family of quandles 
induces a quandle structure on the product $G\times X$ known as the 
\textit{associated quandle}, which raises an interesting question: given
a quandle $Q$, for which $G$-families is $Q$ the associated quandle?

In \cite{N}, the second listed author introduced the notion of 
\textit{quandle polynomials}, two-variable polynomial invariants of finite
quandles which reflect the distribution of trivial action 
throughout the quandle. These quandle invariants were then used to enhance 
the quandle counting invariants, providing a set of new knot and link 
invariants. Generalizations of the quandle polynomial were studied in 
\cite{CN,N2} and elsewhere.

In this paper we extend the quandle polynomial idea to the case of 
$G$-families of quandles. As an application we obtain new invariants of
handlebody-links and trivalent spatial graphs. The paper is organized
as follows. In Section \ref{QB} we reviews the basics of quandles and quandle
polynomials. In Section \ref{GF} we review the basics of $G$-families of
quandles and their handlebody-link and trivalent spatial graph invariants.
In Section \ref{GFM} we introduce our $G$-family polynomials and provide 
examples. As an application, in Section \ref{GFE} we introduce new enhanced
invariants of handlebody-links and trivalent spatial graphs using $G$-family 
polynomials. We end with some questions for future research in Section \ref{Q}.

\section{\large\textbf{Quandles and Quandle Polynomials}}\label{QB}

We begin with a definition; see \cite{J,M,EN} etc. for more.

\begin{definition}
Let $X$ be a set. A \textit{quandle operation} or \textit{quandle structure}
on $X$ is a binary operation $\tr:X\times X\to X$ such that 
\begin{itemize}
\item[(i)] For all $x\in X$, we have $x\tr x=x$,
\item[(ii)] For all $x,y\in X$ there is a unique $z\in X$ such that $x=z\tr y$,
and
\item[(iii)] For all $x,y,z\in X$, we have $(x\tr y)\tr z=(x\tr z)\tr(y\tr z)$.
\end{itemize}
These properties are known respectively as \textit{idempotence}, 
\textit{right-invertibility} and \textit{right self-distributivity}. Axiom (ii)
is equivalent to 
\begin{itemize}
\item[(ii$'$)] There is a binary operation $\tr^{-1}$ on $X$ such that for all $x,y\in X$ we have $(x\tr y)\tr^{-1}y=x$ and $(x\tr^{-1}y)\tr y=x$.
\end{itemize}
A set $X$ with a choice of quandle operation is a \textit{quandle}.
If the $\tr^{-1}$ operation is the same as the $\tr$ operation, i.e. if 
for all $x,y\in X$ we have $(x\tr y)\tr y=x$, we say $X$ is an 
\textit{involutory quandle} or \textit{kei}.
\end{definition}

\begin{example}
Standard examples of quandles include
\begin{itemize} 
\item The empty set $\emptyset$ is a quandle since the quandle
axioms are not existentially quantified,
\item Any singleton set $\{x\}$ is a quandle with $x\tr x=x$, 
\item More generally, any set $X$ is a quandle with the \textit{trivial 
quandle operation} $x\tr y=x$ for all $x,y\in X$,
\item Any group $G$ is a quandle with the \textit{core quandle operation} 
$x\tr y=yx^{-1}y$ for all $x,y\in X$,
\item Any group $G$ is a quandle with the \textit{$n$-fold conjugation 
quandle operation} $x\tr y=y^{-n}xy^n$  for all $x,y\in X$ for a choice of 
$n\in\mathbb{Z}$,
\item Any $\mathbb{Z}[t^{\pm 1}]$-module $M$ is a quandle with the 
\textit{Alexander quandle operation} $x\tr y=tx+(1-t)y$ for all $x,y\in M$,
\item Any vector space over a field of characteristic other than 2 
with symplectic form $[,]$ is a quandle with the
\textit{symplectic quandle operation} $x\tr y=x+[x,y]y$ for all $x,y\in X$.
\end{itemize}
We note that disjoint unions of quandles acting trivially on each other form
quandles and that every quandle can be decomposed as a disjoint union of 
\textit{orbit subquandles} acting on each other (not necessarily trivially).
\end{example}

\begin{definition}
Let $X$ and $Y$ be quandles. A map $f:X \to Y$ is a \textit{quandle 
homomorphism} if for all $x,x'\in X$ we have
\[f(x\tr x')=f(x)\tr f(x').\]
For finite quandles, a bijective quandle homomorphism is a \textit{quandle 
isomorphism}, and we say two quandles $X,Y$ are \textit{isomorphic} if there
exists an isomorphism $f:X\to Y$.
\end{definition}

Next, we review the notion of quandle polynomials. For more, see \cite{EN} or 
\cite{N}.

\begin{definition}
Let $X$ be a finite quandle. For each $x\in X$, define quantities $c(x)$ and
$r(x)$ by
\[\begin{array}{rcl}
c(x) & = & |\{y\in x\ | y\tr x=y\}| \\
r(x) & = & |\{y\in x\ | x\tr y=x\}|.
\end{array}\]
Then the \textit{quandle polynomial} of $X$ is the two-variable polynomial
\[\phi(X)=\sum_{x\in X} t^{c(x)}s^{r(x)}.\]
\end{definition}

\begin{example}
Let $X=\{1,2,3\}$ have the quandle structure given by the operation table
\[\begin{array}{r|rrr}
\tr & 1 & 2 & 3 \\ \hline
1 & 1 & 1 & 2 \\
2 & 2 & 2 & 1 \\
3 & 3 & 3 & 3.
\end{array}\]
To compute $\phi(X)$, for each element of $X$ we count the number of times
the row number appears in the row and column corresponding to the element.
In this case, looking in row $1$ we see the row number $1$ twice and
in column  $1$ we see the row number three times, so the element 
$1\in X$ contributes $t^3s^2$ to $\phi(X)$. Similarly, the element 
$2\in X$ contributes $t^3s^2$ and the element $3\in X$ contributes $t^1s^3$, 
so we have
\[\phi(X)=ts^3+2t^3s^2.\]
\end{example}

In \cite{N} we have the following result:
\begin{theorem}
If $X$ and $Y$ are isomorphic, then $\phi(X)=\phi(Y)$.
\end{theorem}

In \cite{N} it was shown by direct calculation that all quandles of order up to
five were distinguished from each other by their quandle polynomials, raising
the hope that the quandle polynomial might determine the quandle up to
isomorphism class; however, in \cite{CN}, examples were found of nonisomorphic
quandles of order six with the same quandle polynomial. Quandle polynomials
were generalized to rack polynomials in \cite{CN} and biquandle polynomials
in \cite{N2}. In the remainder of this paper, we will extend quandle polynomials
to the case of $G$-families of quandles.

\section{\large\textbf{G-Families of Quandles and Spatial Graphs}}\label{GF}

We begin this section with a definition from \cite{I}.

\begin{definition}
Let $G$ be a group and $X$ a set. A \textit{$G$-family of quandles} is a 
choice of quandle operation $\tr^g$ on $X$ for each element of $G$ such
that
\begin{itemize}
\item[(iv)] For all $x\in X$ and $g\in G$, $x\tr^g x=x$,
\item[(v)] For all $x,y\in X$ and $g,h\in G$, 
\[x\tr^{gh}y  = (x\tr^g y)\tr^h y \quad \mathrm{and}\quad x\tr^{1} y=x\]
where $1\in G$ is the identity, and
\item[(vi)] For all $x,y,z\in G$, 
\[(x\tr^g y)\tr^h z=(x\tr^h z)\tr^{h^{-1}gh} (y\tr^h z).\]
\end{itemize}
\end{definition}

\begin{example}
Any group $G$ can be regarded as a $G$-family
of singleton quandles $X=\{x\}$ with $x\tr^g x=x$ for all $g\in G$.
\end{example}

\begin{example}
Any group $G$ and set $X$ defines a $G$-family of quandles by setting
$x\tr^g y=x$ for all $x,y\in X$ and $g\in G$; this is the \textit{trivial
$G$-family} on $X$.
\end{example}

\begin{example}
A kei $X$ can be completed to a $\mathbb{Z}_2$-family of quandles where 
$\mathbb{Z}_2=\{1,t\ |\ t^2=1\}$ is the cyclic group of order 2 by
including a trivial quandle on the same set.
Let us define $x \tr^1y = x$ and $x\tr^t y=x\tr y$ where $\tr$ is the
quandle operation of $X$. Then we verify:
\begin{itemize}
\item[(iv)] For all $x\in X$, $x\tr^1x=x$ by definition of $\tr^1$ and 
$x\tr^tx=x$ since $X$ is a quandle,
\item[(v)] Let $x,y\in X$. There are four cases to verify:
\begin{eqnarray*}
x \tr^{1(1)} y & = & x \tr^1 y = (x\tr^1 y)\tr^1y, \\
x \tr^{t(1)} y & = & x \tr^t y = (x\tr^t y)\tr^1y, \\
x \tr^{1(t)} y & = & x \tr^y y = (x\tr^1 y)\tr^ty \ \mathrm{and} \\
x \tr^{t(t)} y & = & x \tr^1 y =x = (x\tr^t y)\tr^ty 
\end{eqnarray*}
where the last condition holds since $X$ is a kei.
\end{itemize}
\end{example}

Thus, the notion of $G$-families of quandles can be considered a generalization
of kei to larger groups $G$.

\begin{example}
We can specify a finite $G$-family of quandles with operation tables for the
group $G$ and the operations $\tr^g$. For example, the data
\[
\begin{array}{r|rrr}
\cdot & 1 & 2 & 3 \\ \hline
1 & 1 & 2 & 3 \\
2 & 2 & 3 & 1 \\
3 & 3 & 1 & 2
\end{array}\quad \begin{array}{r|rrrr} 
\tr^1 & 1 & 2 & 3 & 4 \\ \hline
1 & 1 & 1 & 1 & 1\\
2 & 2 & 2 & 2 & 2\\
3 & 3 & 3 & 3 & 3\\
4 & 4 & 4 & 4 & 4
\end{array}\quad \begin{array}{r|rrrr} 
\tr^2 & 1 & 2 & 3 & 4 \\ \hline
1 & 1 & 4 & 2 & 3\\
2 & 3 & 2 & 4 & 1\\
3 & 4 & 1 & 3 & 2\\
4 & 2 & 3 & 1 & 4
\end{array}\quad \begin{array}{r|rrrr} 
\tr^3 & 1 & 2 & 3 & 4 \\ \hline
1 & 1 & 3 & 4 & 2 \\
2 & 4 & 2 & 1 & 3 \\
3 & 2 & 4 & 3 & 1 \\
4 & 3 & 1 & 2 & 4
\end{array}
\]
defines a $\mathbb{Z}_3$-family of quandles.
\end{example}

As noted in \cite{I}, if $(G,X)$ is a $G$-family of quandles, then the
set $G\times X$ is a quandle under the operation
\[(g,x)\tr (h,y)=(h^{-1}gh,x\tr^h y).\]
This quandle structure is known as the \textit{associated quandle} of the
$G$-family $(G,X)$.

\begin{example}
Let $G$ be a group and $X=\{x\}$ a singleton quandle. Then there is an
isomorphism of quandles between the associated quandle of the $G$-family
$(G,X)$ and the conjugation quandle of $G$ given by $f(g,x)=g$:
\[f((g,x)\tr (h,x))=f(h^{-1}gh,x\tr^h x)=h^{-1}gh=g\tr h=f(g,x)\tr f(h,x).\]
\end{example}

We note that not every quandle is the associated quandle of a
$G$-family of quandles. For example, the quandle structure on the set
$X=\{1,2,3\}$ given by the operation table
\[\begin{array}{r|rrr}
\tr & 1 & 2 & 3 \\ \hline
1 & 1 & 1 & 2\\
2 & 2 & 2 & 1\\
3 & 3 & 3 & 3
\end{array}\]
is a quandle of cardinality 3, so we would need either $G=\{1\}$ and
$|X|=3$ or $G=\mathbb{Z}_3$ and $X=\{x\}$ a singleton
quandle. In the former case, the requirement of axiom (v) that $x\tr^1 y=x$
for all $x,y\in X$ is contradicted and in the latter case, the condition
\[(g,x)\tr (h,y)=(h^{-1}gh,x\tr^h y)\]
says
\[(g,x)\tr (h,x)=(-h+g+h,x)=(g,x)\]
and the associated quandle is trivial.

\begin{example}
The $\mathbb{Z}_2$-family of quandles given by 
\[
\begin{array}{r|rr}
\cdot & 1 & 2 \\\hline
1 & 1 & 2\\
2 & 2 & 1
\end{array}\ \
\begin{array}{r|rrr}
\tr_1 & 1 & 2 & 3 \\ \hline
1 & 1 & 1 & 1\\
2 & 2 & 2 & 2 \\
3 & 3 & 3 & 3
\end{array}\ \
\begin{array}{r|rrr}
\tr_2 & 1 & 2 & 3 \\ \hline
1 & 1 & 3 & 2\\
2 & 3 & 2 & 1\\
3 & 2 & 1 & 3
\end{array}
\]
has associated quandle
\[\begin{array}{r|llllll}
\tr & 1 & 2 & 3 & 4 & 5 & 6 \\ \hline
1 & 1 & 1 & 1 & 1 & 3 & 2 \\
2 & 2 & 2 & 2 & 3 & 2 & 1 \\
3 & 3 & 3 & 3 & 2 & 1 & 3 \\
4 & 4 & 4 & 4 & 4 & 6 & 5 \\
5 & 5 & 5 & 4 & 6 & 5 & 4 \\
6 & 6 & 6 & 6 & 5 & 4 & 6.
\end{array}\]
\end{example}

\begin{definition}
Let $(G,X)$ be a $G$-family of quandles and $Y\subset X$ a subset. 
We say $(G,Y)$ is a \text{$G$-subfamily of quandles} 
of $(G,X)$ if $(G,Y)$ is a $G$-family of quandles under the operations 
$\tr^g$ inherited from $(G,X)$.
\end{definition}

\begin{example}
In the $\mathbb{Z}_2$-family given by the operation tables
\[
\begin{array}{r|rr}
\cdot & 1 & 2  \\ \hline
1 & 1 & 2  \\
2 & 2 & 1  \\
\end{array}\quad \begin{array}{r|rrr} 
\tr^1 & 1 & 2 & 3  \\ \hline
1 & 1 & 1 & 1 \\
2 & 2 & 2 & 2 \\
3 & 3 & 3 & 3 \\
\end{array}\quad \begin{array}{r|rrr} 
\tr^2 & 1 & 2 & 3  \\ \hline
1 & 1 & 1 & 2\\
2 & 2 & 2 & 1\\
3 & 2 & 3 & 3 \\
\end{array}\ 
\]
we have $G$-subfamilies with $X=\{1,2\}$, $X=\{1\}$, 
$X=\{2\}$ and $X=\{3\}$.
\end{example}

$G$-families of quandles were used in \cite{I}
for distinguishing \textit{$Y$-oriented trivalent spatial graphs} 
and their quotient objects, \textit{handlebody-links}.
More precisely, let $K$ be a diagram consisting of oriented classical
crossings and trivalent vertices where we disallow sources and sinks.

Two trivalent spatial graph diagrams represent ambient isotopic spatial graphs
if they are related by a sequence of the following moves:
\[\includegraphics{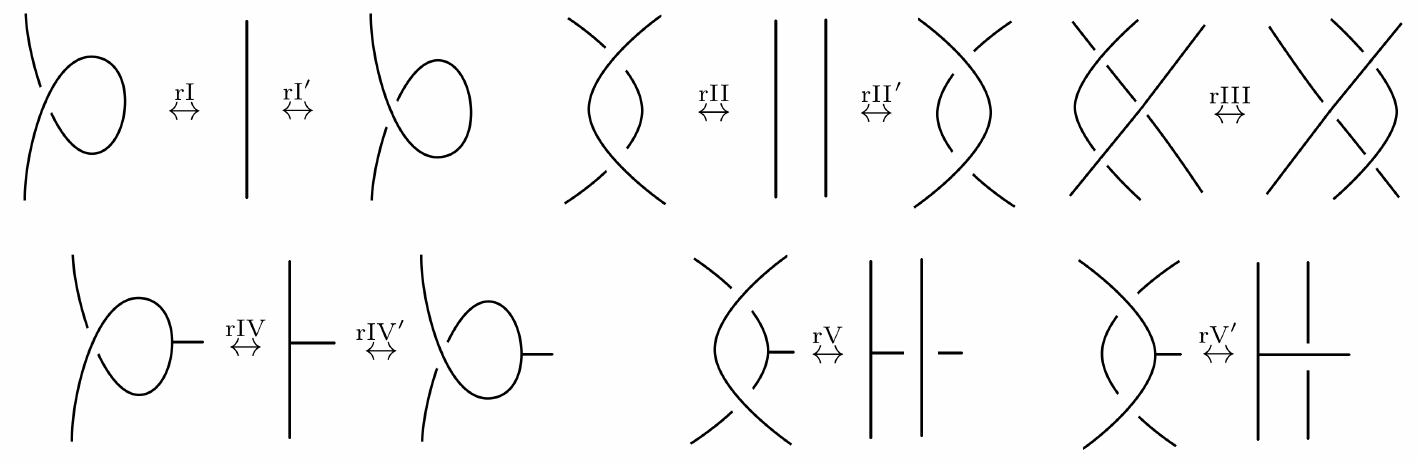}\]

Additionally allowing the \textit{IH-move}
\[\includegraphics{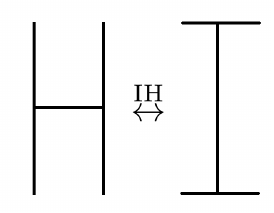}\]
give us \textit{handlebody-links} as a quotient of trivalent spatial graphs. 
See \cite{I} for more.

Then given a $G$-family of quandles $(G,X)$, a \textit{$(G,X)$-coloring} of
$K$ is an assignment of a pair $(g,x)$ to each arc in $K$ such that at crossings
and vertices, the following conditions are satisfied:
\[\includegraphics{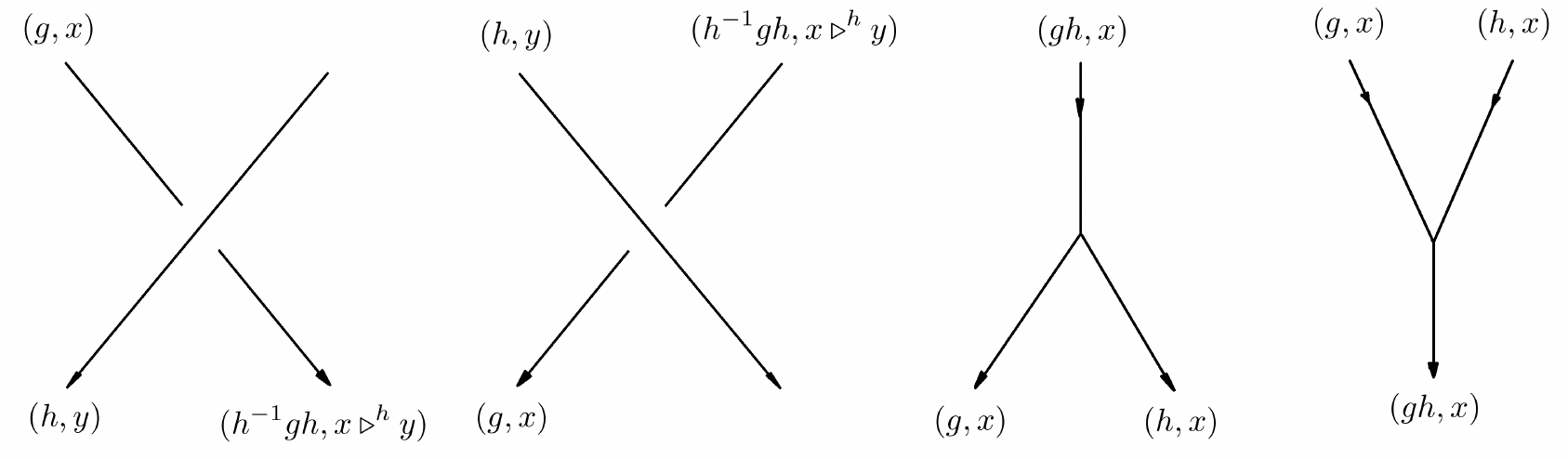}\]

Then as shown in \cite{I}, each $(G,X)$-coloring of $K$ before a
Reidemeister move corresponds to a unique $(G,X)$-coloring of 
$K$ after the move, and we have:

\begin{theorem}
Let $(G,X)$ be a $G$-family of quandles. The cardinality of the set
$\mathcal{C}((G,X),K)$ of $(G,X)$-colorings of a $Y$-oriented trivalent
spatial graph $K$ is an invariant of $Y$-oriented trivalent spatial graphs 
and $Y$-oriented handlebody-knots 
under ambient isotopy. This is known as the \textit{$G$-family counting 
invariant}, denoted 
\[\Phi_{(G,X)}^{\mathbb{Z}}(K)=|\mathcal{C}((G,X),K)|.\]
\end{theorem}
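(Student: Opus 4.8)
The plan is to invoke the fundamental combinatorial fact, established in \cite{I}, that two $Y$-oriented trivalent spatial graph diagrams represent ambient isotopic spatial graphs precisely when they are related by a finite sequence of the moves depicted above, with handlebody-links obtained by additionally allowing the IH-move. Given this, it suffices to show that each such move induces a bijection between the set of $(G,X)$-colorings of the diagram before the move and the set of colorings after; since a composition of bijections is a bijection and bijections preserve cardinality, the quantity $|\mathcal{C}((G,X),K)|$ will then depend only on the ambient isotopy class of $K$, as required.

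To build these bijections, for each move I would fix a coloring of the diagram and observe that the colors assigned to arcs lying outside the small disk in which the move takes place are left untouched. The content of the argument is then purely local: one checks that, given the colors on the arcs entering the move region, the coloring conditions at the crossings and vertices inside the region determine a unique consistent coloring of the remaining arcs, and that this correspondence is invertible. For the Reidemeister-type crossing moves this is exactly the computation carried out in \cite{I}, where the three defining relations of a $G$-family are used: idempotence (iv) handles the Reidemeister~I move; the relation $x\tr^{gh}y=(x\tr^g y)\tr^h y$ together with $x\tr^1 y=x$ from (v) handles Reidemeister~II, since specializing $h=g^{-1}$ forces $(x\tr^g y)\tr^{g^{-1}}y=x$ and thus provides the invertibility needed to cancel the crossings; and the self-distributive identity (vi), whose conjugation term $h^{-1}gh$ matches the group-component transformation $(g,x)\tr(h,y)=(h^{-1}gh,\,x\tr^h y)$ of the associated quandle, handles Reidemeister~III.

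The moves genuinely involving a trivalent vertex require the same style of local check but now use the vertex coloring rule, under which the incident arcs carry equal $X$-components and $G$-components that combine compatibly. The vertex-sliding moves are verified by pushing a strand past a vertex and comparing the resulting group elements, which agree by associativity in $G$ and by relation (v). Finally, to pass to handlebody-links I would verify the IH-move separately: this move reassociates which pair of edges meets at a vertex, and the equality of the coloring sets before and after follows once more from associativity of the group multiplication together with the matching of the $X$-components across the reassociated edges.

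I expect the main obstacle to be organizing the vertex and IH-move verifications so that the bookkeeping of orientations and group elements is unambiguous, since these are the steps where the interplay between the $G$-component product and the family of operations $\tr^g$ is most delicate; the crossing moves, by contrast, are essentially imported from \cite{I}. Assembling these per-move bijections then yields the theorem, and the same bijections simultaneously establish invariance for both the spatial-graph and the handlebody-link theories, since the latter uses exactly the former's moves together with the IH-move. The invariant $\Phi_{(G,X)}^{\mathbb{Z}}(K)=|\mathcal{C}((G,X),K)|$ is thus well defined.
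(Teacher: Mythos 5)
Your proposal is correct and follows essentially the same route as the paper, which simply cites \cite{I} for the fact that each $(G,X)$-coloring before a move corresponds to a unique coloring after the move, so that the per-move local bijections (Reidemeister moves via axioms (iv)--(vi), vertex and IH-moves via the vertex rule and associativity in $G$) assemble into an invariance proof. Your write-up just makes explicit the verification that the paper delegates to \cite{I}.
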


\begin{example}\label{ex:theta}
Consider the \textit{theta graph}, one of two unknotted trivalent spatial
graphs with two vertices, and let $(G,X)$ be the $\mathbb{Z}_2$-family given by
\[
\begin{array}{r|rr}
G & 1 & 2  \\ \hline
1 & 1 & 2  \\
2 & 2 & 1  \\
\end{array}\quad \begin{array}{r|rrr} 
\tr^1 & 1 & 2 & 3  \\ \hline
1 & 1 & 1 & 1 \\
2 & 2 & 2 & 2 \\
3 & 3 & 3 & 3 \\
\end{array}\quad \begin{array}{r|rrr} 
\tr^2 & 1 & 2 & 3  \\ \hline
1 & 1 & 3 & 2\\
2 & 3 & 2 & 1\\
3 & 2 & 1 & 3 \\
\end{array}.
\]
Then there are 12 $X$-colorings of this graph as shown:
\[\scalebox{1.5}{\includegraphics{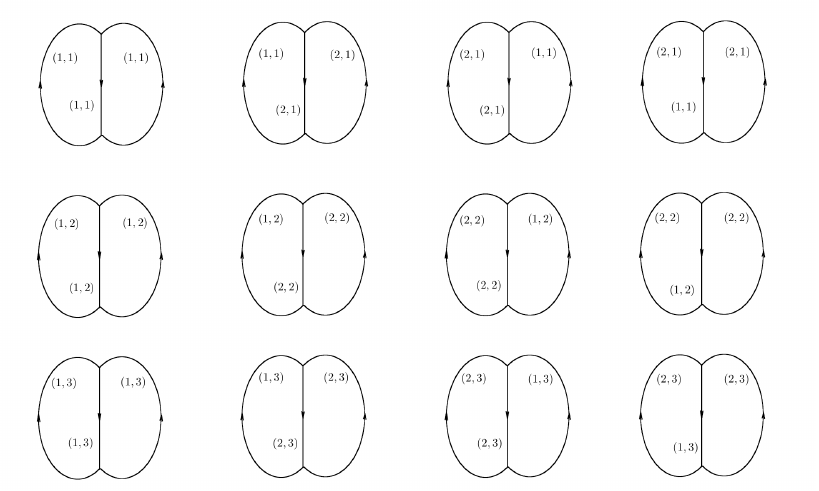}}\]
Thus, the G-family counting invariant for this theta graph with respect
to $X$ is $\Phi_{(G,X)}^{\mathbb{Z}}(\theta)=12.$
\end{example}

Recall that the \textit{image subquandle} of a quandle homomorphism
$f:X\to Y$ is the set of elements $y=f(x)\in Y$ which are the images of
elements $x\in X$ under $f$.

\begin{definition}
The $G$-subfamily generated by the set of pairs $(g,x)\in (G,X)$ appearing
in a particular $(G,X)$-coloring $f\in\mathcal{C}((G,X),K)$, i.e., the 
smallest $G$-subfamily of $(G,X)$ containing all the pairs $(g,x)$ in the 
coloring, is the \textit{image $G$-subfamily} of the coloring, denoted 
$\mathrm{Im}(f)$.
\end{definition}

\begin{example}
The image $G$-subfamilies for the colorings in Example \ref{ex:theta}
are $\{(1,1),(2,1)\}$ for the four colorings in the top row, 
$\{(1,2),(2,2)\}$ for the four colorings in the middle row and
$\{(1,3),(2,3)\}$ for the four colorings in the last row.
\end{example}

\section{\large\textbf{G-Family Polynomials}}\label{GFM}

Let us now turn to the question of quandle polynomials for $G$-families of
quandles. The simplest approach is to use the \textit{associated quandle}.
As observed in \cite{I}, given a $G$-family of quandles, there is a
quandle structure on $G\times X$ given by
\[(g,x)\tr(g',x')=(g'^{-1}gg',x\tr^g x')\]
known as the \textit{associated quandle} of the $G$-family. We will denote
this quandle structure by $A(G,X)$. Since $A(G,X)$
is a quandle, it has a quandle polynomial. Thus, we have:

\begin{definition}
Let $(G,X)$ be a $G$-family of quandles. The \textit{associated quandle
polynomial} of $(G,X)$, denoted $\phi_{A(G,X)}$, is the quandle
polynomial of $A(G,X)$, i.e.,
\[\sum_{(g,x)\in A(G,X)}t^{|c(g,x)|}s^{|r(g,x)|}.\]
\end{definition}

\begin{example}
The $\mathbb{Z}_2$-family of quandles given by
\[
\begin{array}{r|rr}
\cdot & 1 & 2 \\ \hline
1 & 1 & 2 \\
2 & 2 & 1
\end{array}\quad \begin{array}{r|rrrr}
\tr_2 & 1 & 2 &3  &4 \\ \hline
1 & 1& 1& 1& 1 \\
2 & 2& 2& 2& 2 \\
3 & 3& 3& 3& 3 \\
4 & 4& 4& 4& 4
\end{array}\quad \begin{array}{r|rrrr}
\tr_2 & 1 & 2 &3  &4 \\ \hline
1 & 1& 3& 1& 1 \\
2 & 2& 2& 2& 2 \\
3 & 3& 1& 3& 3 \\
4 & 4& 4& 4& 4
\end{array}
\]
has associated quandle
\[
\begin{array}{r|rrrrrrrr}
\tr & 1 & 2 & 3 & 4 & 5 &6 & 7 & 8 \\ \hline
1 & 1& 1& 1& 1& 1& 3& 1& 1 \\
2 & 2& 2& 2& 2& 2& 2& 2& 2 \\
3 & 3& 3& 3& 3& 3& 1& 3& 3 \\
4 & 4& 4& 4& 4& 4& 4& 4& 4 \\
5 & 5& 5& 5& 5& 5& 7& 5& 5 \\
6 & 6& 6& 6& 6& 6& 6& 6& 6 \\
7 & 7& 7& 7& 7& 7& 5& 7& 7 \\
8 & 8& 8& 8& 8& 8& 8& 8& 8
\end{array}
\]
and hence associated quandle polynomial $\phi_{A(G,X)}=3t^8s^8+4t^8s^7+t^4s^8.$ 
\end{example}

A second approach to quandle polynomials for $G$-families makes use of the 
$G$-family structure to get a two-variable polynomial with coefficients in the
group ring $\mathbb{Z}[G]$ over $G$.

\begin{definition}
Let $(G,X)$ be a $G$-family of quandles. For each $g\in G$, we have a
quandle structure on $X$ given by $\tr^g$; let 
\[\begin{array}{rcl}
c_g(x) & = & |\{y\in x\ | y\tr^g x=y\}| \\
r_g(x) & = & |\{y\in x\ | x\tr^g y=x\}|.
\end{array}\]
The \textit{$G$-family polynomial}, denoted $\phi_{(G,X)}$, is given by
\[\sum_{(g,x)\in (G,X)} gt^{|c_g(x)|}s^{|r_g(x)|}.\]
\end{definition}

\begin{remark}
For $G$-families of quandles presented via operation tables, we will
specify elements of $\mathbb{Z}[G,t,s]$ as $|G|$-tuples of polynomials
in $\mathbb{Z}[s,t]$. For example, if  $G=\{g_1,g_2,g_3\}$ then the
polynomial 
\[\phi=3g_1t^2s+6g_2ts^2+2g_2ts+4g_3\]
will be expressed as
\[{}[3t^2s, 6ts^2+2ts, 4].\]
\end{remark}

\begin{example}
Consider the $\mathbb{Z}_3$-family of quandles with $X=\{1,2,3,4\}$ given by
\[
\begin{array}{r|rrr}
\cdot & 1 & 2 & 3 \\ \hline
1 & 1 & 2& 3 \\
2 & 2 & 3& 1 \\
3 & 3 & 1& 2 \\
\end{array}\ \ \begin{array}{r|rrrr}
\tr^1 & 1 & 2 & 3 & 4 \\ \hline
1 & 1& 1& 1& 1 \\
2 & 2& 2& 2& 2 \\
3 & 3& 3& 3& 3 \\
4 & 4& 4& 4& 4  
\end{array}\ \ \begin{array}{r|rrrr}
\tr^2 & 1 & 2 & 3 & 4 \\ \hline
1 & 1& 3& 4& 2 \\
2 & 4& 2& 1& 3 \\
3 & 2& 4& 3& 1 \\
4 & 3& 1& 2& 4  
\end{array}\ \ \begin{array}{r|rrrr}
\tr^3 & 1 & 2 & 3 & 4 \\ \hline
1 & 1& 4& 2& 3 \\
2 & 3& 2& 4& 1 \\
3 & 4& 1& 3& 2 \\
4 & 2& 3& 1& 4
\end{array}\]

The element $1\in X$ contributes $g_1t^4s^4+g_2ts+g_3ts$ to the $G$-family
polynomial; repeating for the other elements of $X$, we have
\[\phi=[4t^4s^4,4ts,s4t].\]

\end{example}

\begin{definition}
Let $(G,X)$ be a $G$-family of quandles and $(G,Y)$ a $G$-subfamily. Then:
\begin{itemize}
\item The \textit{associated subquandle polynomial} of $(G,Y)$ is the
subquandle polynomial $\phi_{A(G,Y)\subset A(G,X)}$ of the associated quandle
of $(G,Y)$ considered as a subquandle of the associated quandle of $(G,X)$, and
\item The \textit{$G$-subfamily polynomial} of $(G,Y)$, denoted
$\phi_{(G,Y)\subset(G,X)}$, is the sum of contributions of elements of $(G,Y)$ to 
$\phi_{(G,X)}$, i.e.
\[\phi_{(G,Y)\subset(G,X)}=\sum_{(g,x)\in (G,Y)} gt^{|c_g(x)|}s^{|r_g(x)|}.\]
\end{itemize}
\end{definition}

\section{\large\textbf{$G$-family Polynomial Enhancements}}\label{GFE}

We can now apply our definitions to enhance the $G$-family counting invariant.

\begin{definition}
Let $(G,X)$ be a $G$-family of finite quandles and let $K$ be a $Y$-oriented
trivalent spatial graph. Then we define the
\begin{itemize}
\item \textit{Associated subquandle polynomial invariant} of $K$ with respect 
to $(G,X)$ to be the multiset of associated subquandle polynomials of the
image subquandles of each coloring, i.e.
\[\Phi_{(G,X)}^{A}(K)=\{\phi_{A(G,\mathrm{Im}(f))\subset A(G,X)}\ |\ f\in\mathcal{C}((G,X),K)\}\]
and the
\item \textit{$G$-family subquandle polynomial invariant} of $K$ with respect 
to $(G,X)$ to be the multiset of $G$-family subquandle polynomials of the
image subquandles of each coloring, i.e.
\[\Phi_{(G,X)}(K)=\{\phi_{(G,\mathrm{Im}(f))\subset (G,X)}\ |\ f\in\mathcal{C}((G,X),K)\}.\]
\end{itemize}
\end{definition}

We then state our main result:

\begin{proposition}
Let $(G,X)$ be a $G$-family of quandles.
If two handlebody-link diagrams $L$ and $L'$ are related by Reidemeister moves,
then $\Phi_{(G,X)}(L)=\Phi_{(G,X)}(L')$ and $\Phi_{(G,X)}^A(L)=\Phi_{(G,X)}^A(L')$.
\end{proposition}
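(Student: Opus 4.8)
The plan is to reduce the statement to the coloring correspondence already established for the $G$-family counting invariant, and then to show that this correspondence preserves image $G$-subfamilies, so that the two multisets of polynomials are indexed in exactly the same way.

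First I would invoke the $G$-family counting invariant theorem: if $L$ and $L'$ differ by a single move, then each $(G,X)$-coloring of $L$ corresponds to a unique $(G,X)$-coloring of $L'$, giving a bijection $\beta\colon\mathcal{C}((G,X),L)\to\mathcal{C}((G,X),L')$. Both $\Phi_{(G,X)}$ and $\Phi_{(G,X)}^A$ are multisets whose entries are polynomials depending on a coloring $f$ only through its image $G$-subfamily $\mathrm{Im}(f)$ together with the fixed ambient family $(G,X)$. Hence it suffices to prove the key claim that $\beta$ preserves image subfamilies, that is, $\mathrm{Im}(f)=\mathrm{Im}(\beta(f))$ for every coloring $f$.

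Next I would establish the key claim by a uniform move-by-move argument. Outside the small disk in which a move is performed the diagram is unchanged, so the arcs lying outside or crossing the boundary of the disk carry a common set of colors in $L$ and in $L'$; let $S$ be the set of their second coordinates. Each arc interior to the disk receives a color forced by the crossing and vertex coloring conditions, and every such condition expresses a new pair as $(h^{-1}gh,x\tr^h y)$, or its $\tr^{-1}$-analogue, in terms of pairs already present on the boundary. Thus every interior second coordinate lies in the $G$-subfamily generated by $S$, in both $L$ and $L'$. Since the image subfamily is the $G$-subfamily determined by the operation-closure of the set of second coordinates appearing in a coloring, both $\mathrm{Im}(f)$ and $\mathrm{Im}(\beta(f))$ coincide with the subfamily generated by $S$, which proves the claim. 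I would then run this argument through the classical Reidemeister moves, the trivalent-vertex moves, and, in the handlebody-link setting, the IH-move.

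Granting the key claim, the conclusion is formal: for each $f$ we get $\phi_{(G,\mathrm{Im}(f))\subset(G,X)}=\phi_{(G,\mathrm{Im}(\beta(f)))\subset(G,X)}$ and $\phi_{A(G,\mathrm{Im}(f))\subset A(G,X)}=\phi_{A(G,\mathrm{Im}(\beta(f)))\subset A(G,X)}$, so pushing the multisets forward along $\beta$ yields $\Phi_{(G,X)}(L)=\Phi_{(G,X)}(L')$ and $\Phi_{(G,X)}^A(L)=\Phi_{(G,X)}^A(L')$. The step I expect to be the main obstacle is the per-move verification inside the key claim: one must confirm that at every crossing and, especially, at every trivalent vertex and across the IH-move, the coloring conditions force each interior second coordinate into the operation-closure of $S$, so that neither creating nor deleting interior arcs changes the generated subfamily. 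The deletion direction uses that a redundant generator of the form $x\tr^h y$ may be dropped without shrinking an operation-closed set, while the creation direction is exactly the content of axioms (iv)--(vi) that make $(G,X)$-colorings correspond across the moves.
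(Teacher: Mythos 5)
Your proposal is correct and follows essentially the same route as the paper: the paper's proof is the single observation that Reidemeister moves change neither the image $G$-subfamily nor the image subquandle of the associated quandle for a given coloring, which is exactly your key claim. Your move-by-move disk argument and the reduction via the coloring bijection simply supply the details the paper leaves implicit.
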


\begin{proof}
It suffices to observe that the Reidemeister moves do not change the image
$G$-subfamily and the image subquandle of the associated quandle for a
$(G,X)$-coloring of a handlebody-link diagram.
\end{proof}

\begin{example}\label{ex:41}
To illustrate the invariants, let us consider the trivalent
spatial graph below representing handlebody-knot $4_1$
with the $\mathbb{Z}_2$-family of quandles $X$ given by
\[
\begin{array}{r|rr}
\cdot & 1 & 2  \\ \hline
1 & 1 & 2  \\
2 & 2 & 1  \\
\end{array}\quad \begin{array}{r|rrr} 
\tr^1 & 1 & 2 & 3  \\ \hline
1 & 1 & 1 & 1 \\
2 & 2 & 2 & 2 \\
3 & 3 & 3 & 3 \\
\end{array}\quad \begin{array}{r|rrr} 
\tr^2 & 1 & 2 & 3  \\ \hline
1 & 1 & 3 & 2\\
2 & 3 & 2 & 1\\
3 & 2 & 1 & 3. \\
\end{array}
\]
There are eighteen $(G,X)$-colorings, including for example the two
depicted here:
\[\includegraphics{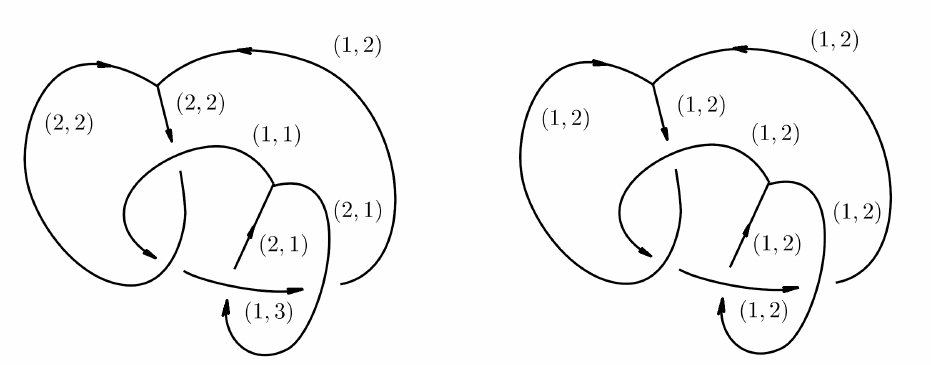}\]
The coloring on the right is a trivial coloring, since all arcs have 
the same color; the coloring on the left is nontrivial.

The image subfamily of the coloring on the left is the entire $G$-family,
while the image subfamily on the right is the subfamily $\{(1,2), (2,2)\}$.
Thus the left coloring contributes $3g_1t^3s^3+3g_2ts$ to the 
$G$-family subquandle polynomial invariant and the coloring on the right 
contributes $g_1t^3s^3+g_2ts$. Repeating for the other colorings, we obtain
invariant value
\[\Phi_{(G,X)}^G(4_1)=\{12\times[t^3s^3,ts],6\times[3t^3s^3,3ts]\}.\]
\end{example}

\begin{example}
Repeating the computation in Example \ref{ex:41} with the associated 
quandle 
\[\begin{array}{r|rrrrrr}
\tr & 1 & 2 & 3 & 4 & 5 & 6 \\ \hline
  1 & 1 & 1 & 1 & 1 & 3 & 2 \\
  2 & 2 & 2 & 2 & 3 & 2 & 1 \\
  3 & 3 & 3 & 3 & 2 & 1 & 3 \\
  4 & 4 & 4 & 4 & 4 & 6 & 5 \\
  5 & 5 & 5 & 5 & 6 & 5 & 4 \\
  6 & 6 & 6 & 6 & 5 & 4 & 6
\end{array}\]
of the $G$-family we obtain invariant value
\[\Phi_{(G,X)}^A(4_1)=\{3\times t^6s^4, 9\times (t^6s^4+t^2s^4), 6\times (3t^6s^4+3t^2s^4)\}.\]
\end{example}

\begin{example}
Using our \texttt{python} code, we compute that
the two handlebody-links 
\[\includegraphics{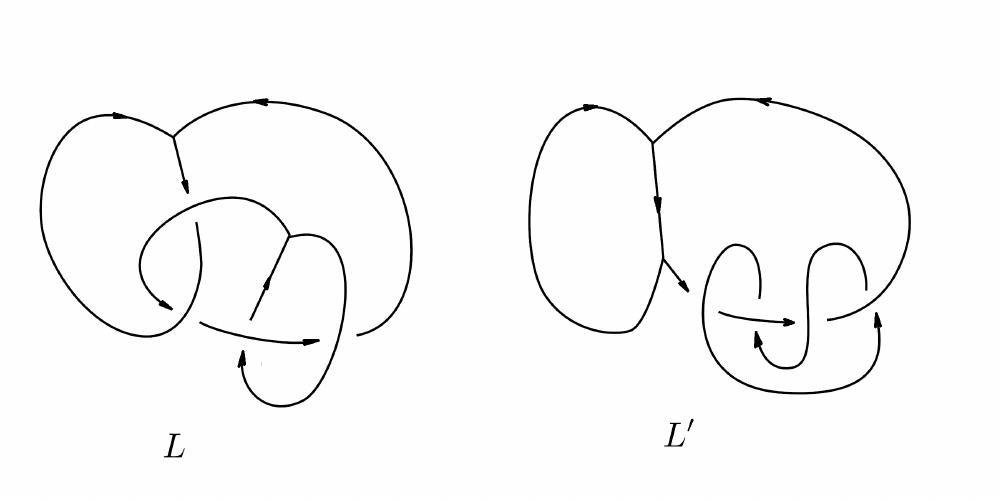}\]
both have 324 $(G,X)$-colorings by the 
$S_3$-family of quandles given by the operation tables
\[
\begin{array}{r|rrrrrr}
\cdot & 1 & 2 & 3 & 4 & 5 & 6 \\ \hline
1 & 1 & 2 & 3 & 4 & 5 & 6 \\
2 & 2 & 1 & 5 & 6 & 3 & 4 \\
3 & 3 & 6 & 1 & 5 & 4 & 2 \\
4 & 4 & 5 & 6 & 1 & 2 & 3 \\
5 & 5 & 4 & 2 & 3 & 6 & 1 \\
6 & 6 & 3 & 4 & 2 & 1 & 5 
\end{array}\] 
\[ \begin{array}{r|rrr}\tr^1 & 1 & 2 & 3 \\ \hline
1 & 1 & 1 & 1 \\ 2 & 2 & 2 & 2 \\ 3 & 3 & 3 & 3
\end{array} \ \begin{array}{r|rrr}\tr^2 & 1 & 2 & 3 \\ \hline
1 & 1 & 3 & 2 \\ 2 & 3 & 2 & 1 \\ 3 & 2 & 1 & 3
\end{array} \ \begin{array}{r|rrr}\tr^3 & 1 & 2 & 3 \\ \hline
1 & 1 & 3 & 2 \\ 2 & 3 & 2 & 1 \\ 3 & 2 & 1 & 3
\end{array} \ \begin{array}{r|rrr}\tr^4 & 1 & 2 & 3 \\ \hline
1 & 1 & 3 & 2 \\ 2 & 3 & 2 & 1 \\ 3 & 2 & 1 & 3
\end{array} \ \begin{array}{r|rrr}\tr^5 & 1 & 2 & 3 \\ \hline
1 & 1 & 1 & 1 \\ 2 & 2 & 2 & 2 \\ 3 & 3 & 3 & 3
\end{array} \ \begin{array}{r|rrr}\tr^6 & 1 & 2 & 3 \\ \hline
1 & 1 & 1 & 1 \\ 2 & 2 & 2 & 2 \\ 3 & 3 & 3 & 3
\end{array}\]

but are distinguished by their $\Phi_{(G,X)}$-values
\begin{eqnarray*}
\Phi_{(X,G)}(L) & = & \{162\times[t^3t^3,ts,ts,ts,t^3t^3,t^3t^3], 162\times[3t^3s^3,3ts,3ts,3ts,3t^3s^3,3t^3s^3] \}\\
\Phi_{(X,G)}(L') & = & \{108\times[t^3s^3,ts,ts,ts,t^3s^3,t^3s^3], 216\times[3t^3s^3,3ts,3ts,3ts,3t^3s^3,3t^3s^3]\}
\end{eqnarray*}
as well as by their $\Phi_{(G,X)}^A$-values
\begin{eqnarray*}
\Phi_{(X,G)}^A(L) & = & \{3\times t^{18}s^{12}, 27\times (t^{18}s^{12} + t^2s^4), 18\times (t^{18}s^{12} + t^9s^9), 18\times (t^{18}s^{12} + 3t^2s^4),\\
& &  36\times (t^{18}s^{12} + 2t^9s^9 + 3t^2s^4), 54\times (3t^{18}s^{12} + 3t^2s^4), 54\times (2t^9s^9 + 3t^2s^4),\\ & & 108\times(6t^9s^9 + 9t^2s^4), 6\times(t^{18}s^{12} + 2t^9s^9)\}
\\
\Phi_{(X,G)}^A(L') & = & \{
3\times t^{18}s^{12}, 6\times 2t^{18}s^{12}, 27\times(t^{18}s^{12} + t^2s^4), 30\times(t^{18}s^{12} + t^9s^9), \\ & &  18\times(2t^{18}s^{12} + t^2s^4),
18\times(3t^{18}s^{12} + t^2s^4), 18\times(t^{18}s^{12} + 3t^2s^4), \\ & & 
 72\times(t^{18}s^{12} + 2t^9s^9 + 3t^2s^4), 54\times(3t^{18}s^{12} + 3t^2s^4), \\ & &  12\times(2t^{18}s^{12} + t^9s^9), 30\times(t^{18}s^{12} + 2t^9s^9), \\ & &  36\times(2t^{18}s^{12} + 2t^9s^9 + 3t^2s^4)
\}
\end{eqnarray*}

In particular, this example shows that both $\Phi_{(X,G)}$ and $\Phi_{(X,G)}^A$ 
are not determined by the the number of $(G,X)$-colorings and hence are proper
enhancements.
\end{example}

\begin{example}
We selected an $S_3$-family of quandles with group and quandle tables
given by
\[
\begin{array}{r|rrrrrr} 
\cdot & 1 & 2 & 3 & 4 & 5 & 6 \\ \hline
1 & 1 & 2 & 3 & 4 & 5 & 6\\
2 & 2 & 1 & 5 & 6 & 3 & 4\\
3 & 3 & 6 & 1 & 5 & 4 & 2\\
4 & 4 & 5 & 6 & 1 & 2 & 3\\
5 & 5 & 4 & 2 & 3 & 6 & 1\\
6 & 6 & 3 & 4 & 2 & 1 & 5
\end{array}\ \ \begin{array}{r|rrrr}
\tr^1 & 1 & 2 & 3 & 4 \\ \hline
1 & 1 & 1 & 1 & 1 \\
2 & 2 & 2 & 2 & 2 \\
3 & 3 & 3 & 3 & 3 \\
4 & 4 & 4 & 4 & 4
\end{array} \ \ \begin{array}{r|rrrr}
\tr^2 & 1 & 2 & 3 & 4 \\\hline
1 & 1 & 1 & 2 & 2 \\
2 & 2 & 2 & 1 & 1 \\
3 & 4 & 4 & 3 & 3 \\
4 & 3 & 3 & 4 & 4
\end{array} \]\[\begin{array}{r|rrrr}
\tr^3 & 1 & 2 & 3 & 4 \\\hline
1 & 1 & 3 & 1 & 3 \\
2 & 4 & 2 & 4 & 2 \\
3 & 3 & 1 & 3 & 1 \\
4 & 2 & 4 & 2 & 4
\end{array} \ \ \begin{array}{r|rrrr}
\tr^4 & 1 & 2 & 3 & 4 \\\hline
1 & 1 & 4 & 4 & 1 \\
2 & 3 & 2 & 2 & 3 \\
3 & 2 & 3 & 3 & 2 \\
4 & 4 & 1 & 1 & 4
\end{array} \ \ \begin{array}{r|rrrr}
\tr^5 & 1 & 2 & 3 & 4 \\\hline
1 & 1 & 3 & 4 & 2 \\
2 & 4 & 2 & 1 & 3 \\
3 & 2 & 4 & 3 & 1 \\
4 & 3 & 1 & 2 & 4
\end{array} \ \ \begin{array}{r|rrrr}
\tr^6 & 1 & 2 & 3 & 4 \\\hline
1 & 1 & 4 & 2 & 3 \\
2 & 3 & 2 & 4 & 1 \\
3 & 4 & 1 & 3 & 2 \\
4 & 2 & 3 & 1 & 4
\end{array}
\]
and computed the $G$-family polynomial 
enhancement for each of the genus 2 handlebody-knots in the table in \cite{IKMS}. 
The results are collected in the tables.

\[\begin{array}{r|l}
K & \Phi_{(G,X)}(K) \\ \hline
4_1 & \{216\times [t^4s^4, t^2s^2, t^2s^2, t^2s^2, ts, ts], 96\times [4t^4s^4, 4t^2s^2, 4t^2s^2, 4t^2s^2, 4ts, 4ts]\} \\
5_1 & \{24\times [t^4s^4, t^2s^2, t^2s^2, t^2s^2, ts, ts]\} \\
5_2 & \{216\times [t^4s^4, t^2s^2, t^2s^2, t^2s^2, ts, ts], 96\times [4t^4s^4, 4t^2s^2, 4t^2s^2, 4t^2s^2, 4ts, 4ts]\}\\ 
5_3 & \{48\times [t^4s^4, t^2s^2, t^2s^2, t^2s^2, ts, ts]\}\\ 
5_4 & \{144\times [t^4s^4, t^2s^2, t^2s^2, t^2s^2, ts, ts]\}\\ 
6_1 & \{144\times [t^4s^4, t^2s^2, t^2s^2, t^2s^2, ts, ts], 
48\times [4t^4s^4, 4t^2s^2, 4t^2s^2, 4t^2s^2, 4ts, 4ts]\}, \\
6_2 & \{120\times[t^4s^4, t^2s^2, t^2s^2, t^2s^2, ts, ts], 
 24\times [4t^4s^4, 4t^2s^2, 4t^2s^2, 4t^2s^2, 4ts, 4ts]\}\\ 
6_3 & \{24\times[t^4s^4, t^2s^2, t^2s^2, t^2s^2, ts, ts]\} \\ 
6_4 & \{72\times [t^4s^4, t^2s^2, t^2s^2, t^2s^2, ts, ts]\} \\
6_5 & \{24\times[t^4s^4, t^2s^2, t^2s^2, t^2s^2, ts, ts]\} \\ 
6_6 & \{24\times[t^4s^4, t^2s^2, t^2s^2, t^2s^2, ts, ts]\} \\ 
6_7 & \{24\times[t^4s^4, t^2s^2, t^2s^2, t^2s^2, ts, ts]\} \\ 
6_8 & \{24\times[t^4s^4, t^2s^2, t^2s^2, t^2s^2, ts, ts]\} \\ 
6_9 & \{72\times [t^4s^4, t^2s^2, t^2s^2, t^2s^2, ts, ts]\}\\
6_{10} & \{216\times [t^4s^4, t^2s^2, t^2s^2, t^2s^2, ts, ts], 72\times [4t^4s^4, 4t^2s^2, 4t^2s^2, 4t^2s^2, 4ts, 4ts]\}\\ 
6_{11} & \{48\times [t^4s^4, t^2s^2, t^2s^2, t^2s^2, ts, ts]\}\\
6_{12} & \{24\times [t^4s^4, t^2s^2, t^2s^2, t^2s^2, ts, ts]\} \\
6_{13} & \{144\times [t^4s^4, t^2s^2, t^2s^2, t^2s^2, ts, ts]\} \\
6_{14} &\{144\times [t^4s^4, t^2s^2, t^2s^2, t^2s^2, ts, ts], 72\times [4t^4s^4, 4t^2s^2, 4t^2s^2, 4t^2s^2, 4ts, 4ts]\}.
\end{array}\]

In particular, $\Phi_{(G,X)}$ distinguishes  the handlebody-knot $6_2$
from $6_{13}$ and $5_4$  despite the counting invariants being equal.
We further note that with this $S_3$-family of quandles, the associated 
quandle version of our invariant further distinguishes handlebody-knots
$4_1$ and $5_2$ despite both having the same number of colorings.
\[\begin{array}{r|l}
K & \Phi_{(G,X)}^A(K) \\ \hline
4_1 & \{4\times t^{24}s^{12}, 36\times(t^{24}s^{12} + t^4s^6), 24\times(t^{24}s^{12} + t^3s^6), 24\times(t^{24}s^{12} + 3t^4s^6), \\ & \ \ 48\times(t^{24}s^{12} + 3t^4s^6 + 2t^3s^6), 24\times(3t^{24}s^{12} + 3t^4s^6), 48\times(4t^{24}s^{12} + 12t^4s^6 + 8t^3s^6),\\ & \ \  72\times(3t^4s^6 + 2t^3s^6),  8\times(t^{24}s^{12} + 2t^3s^6), 24\times(4t^{24}s^{12} + 4t^3s^6)\} \\
5_2 & \{4\times t^{24}s^{12}, 36\times(t^{24}s^{12} + t^4s^6), 24\times(t^{24}s^{12} + 3t^4s^6), 16\times(t^{24}s^{12} + t^3s^6), 24\times(3t^{24}s^{12} + 3t^4s^6),\\  & \ \  120\times(3t^4s^6 + 2t^3s^6), 48\times(12t^4s^6 + 8t^3s^6), 8\times(t^{24}s^{12} + 2t^3s^6), 8\times 2t^3s^6, 24\times(8t^3s^6)\} \\
\end{array}\]

\end{example}

\section{\large\textbf{Questions}}\label{Q}

We conclude with some questions for future research.

\begin{itemize}
\item
$G$-families of quandles have a number of generalizations such as 
\textit{multiple conjugation quandles} \cite{I2}
and \textit{multiple conjugation biquandles} \cite{IIKKMO}. 
$G$-family polynomials should be likewise extendable to these structures.
\item What additional enhancements of the $G$-family counting invariant
are possible, either alone or in combination with $G$-family polynomials? 
\item What further refinements can be made to the $G$-family polynomial 
definition, perhaps allowing for subgroups for $G$?
\item In \cite{I} the notion of a $Q$-family of quandles for a quandle $Q$
is also introduced, suggesting an obvious notion of $Q$-family polynomials; 
these should give invariants analogous to the ones in this paper.
\end{itemize}

\bibliography{mb-sn}{}
\bibliographystyle{abbrv}

\bigskip

\noindent
\textsc{Department of Mathematical Sciences \\
Claremont McKenna College \\
850 Columbia Ave. \\
Claremont, CA 91711} 

\

\end{document}